\newtheorem{corollary}{Corollary}
\theoremstyle{plain}
\newtheorem{theorem}{Theorem}[section]
\theoremstyle{plain}
\newtheorem{lemma}[theorem]{Lemma}
\theoremstyle{remark}
\theoremstyle{plain}
\let\ra\rightarrow
\begin{document}

\

\title{On the degree of a finite cover which fibers over circle}

\author{Inkang Kim}
\address{Korea Institute for Advanced Study, Seoul 02455, South Korea}
\email{inkang@kias.re.kr}
\author{Hongbin Sun}
\address{Department of mathematics, Rutgers University, Hill Center-Busch campus, 110 Frelinghuysen Road, Piscataway, NJ 08854, USA}
\email{hongbin.sun@rutgers.edu}

\begin{abstract}
  We give a lower bound for the degree of a finite cover of a hyperbolic 3-manifold which fibers over the circle, in terms of volume, the diameter of the manifold and other new invariants.
\end{abstract}

{\maketitle}

\footnotetext[1]{2000 {\slshape{Mathematics Subject Classification.}} 53C43, 53C21,
  53C25.}
\footnotetext[2]{{\slshape{Keywords and phrases.}}  Harmonic map, Mapping torus, pseudo-Anosov map, virtual Haken conjecture.}

\footnotetext[3]{Research by Inkang Kim is partially supported by Grant
NRF-2019R1A2C1083865 and KIAS Individual Grant (MG031408). Research by Hongbin Sun is partially supported by Simons Collaboration Grants 615229.}

{\tableofcontents}

\section{Introduction}
Thurston \cite{Th} conjectured that any closed hyperbolic 3-manifold has a finite cover which fibers over the circle. This dubious conjecture is completely solved in the affirmative by Ian Agol \cite{Agol}, yet it is hard to determine how large the degree of this finite cover is.  From the proof of the virtual fibering conjecture (in a group theoretic setting), it is not easy to track down the degree of the finite cover.
Recently, Stern {\cite{stern-scalar-2020}} showed an interesting formula relating the
level set of harmonic functions to the circle and the scalar curvature of a 3-manifold. Combining these ideas, we give a lower bound for the degree of a finite cover which fibers over the circle.

For a closed hyperbolic 3-manifold, we define the genus of $M$ as
$$g(M)=\inf\{g\geq 2: \exists\ \text{continuous}\ f:S_g\ra M\ \text{with injective}\
f_*:\pi_1(S_g)\ra \pi_1(M)\}.$$ Here $S_g$ denotes  the closed surface of genus $g$.
For a closed hyperbolic 3-manifold, $g(M)<\infty$ holds due to \cite{KM}.

Suppose $M_\phi$ is a minimal degree fibered covering of $M$ and $S$ is a fiber surface of $M_{\phi}$ with minimal $|\chi(S)|$.
Then $M_\phi$ is obtained from $S\times \mathbb{R}$ by identifying $S\times\{t\}$ with $S\times\{t+1\}$ via a pseudo-Anosov map $\phi$. Then $W=W(M_\phi)$ is defined to be the width of this fundamental domain, i.e., the supremum of the shortest distance between the points in $S\times\{0\}$ and the points in $S\times\{1\}$, among all representatives of $S$. Then one can show that   $$k\geq \frac{2\pi W(2g(M)-2)}{3\operatorname{vol}(M)}.$$                                 
Since it is hard to express $W$ in terms of invariants of $M$, we use the diameter of $M$ to get a lower bound for the degree. 

\begin{theorem}
  \label{main}Let $M$ be a closed hyperbolic 3-manifold.
  Suppose that  $M'$ is a degree $k$ cover of $M$ which fibers over the circle with a fiber $S$.
  Furthermore, if  we assume that $\operatorname{inj}M>\epsilon$ and the diameter of $M$ is large, then there exists  $\delta=\delta(\epsilon, g(S))$ such that
  $$k\geq \frac{(\mathrm{diam}(M)-\delta)2\pi(2g(M)-2)}{3\mathrm{vol}(M)}.$$
 \end{theorem}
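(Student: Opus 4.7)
The plan is to combine the preamble inequality
\[
k \geq \frac{2\pi W (2g(M) - 2)}{3\operatorname{vol}(M)}
\]
with a lower bound $W \geq \operatorname{diam}(M) - \delta$, where $\delta$ depends only on $\epsilon$ and $g(S)$. Substituting the second into the first yields the theorem, so it suffices to establish this width estimate under the hypothesis $\operatorname{inj}(M) > \epsilon$.

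First I would control the ambient diameter of a minimal fiber. Take a least-area representative $F$ of the fiber class in the minimal-degree cover $M_\phi$; this exists by Schoen-Yau/Sacks-Uhlenbeck. The Gauss equation for a minimal surface in ambient sectional curvature $-1$ gives $K_F \leq -1$, so Gauss-Bonnet bounds $\operatorname{Area}(F) \leq 2\pi(2g(S) - 2)$. Since $M_\phi \to M$ is a local isometry, $\operatorname{inj}(M_\phi) > \epsilon$; monotonicity for minimal surfaces then implies $\operatorname{Area}(F \cap B_r(x)) \geq c_0 r^2$ for $x \in F$ and $r \leq \epsilon$. A packing argument on $F$ therefore yields $\operatorname{diam}(F) \leq D_0 = D_0(\epsilon, g(S))$.

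Next I would compare $\operatorname{diam}(M_\phi)$ to $W$. In the infinite cyclic cover $\tilde M_\phi = S \times \mathbb{R}$, the preimages $F_n$ of $F$ are $\mathbb{Z}$-translated minimal surfaces, with $d(F_n, F_{n+1}) = W(F) \leq W$ and $\operatorname{diam}(F_n) \leq D_0$. For any two points of $M_\phi$, I lift to a distance-realizing pair $\tilde p, \tilde q \in \tilde M_\phi$, which (after applying a suitable power of the deck $\mathbb{Z}$-action) lie in at most two adjacent slabs $[F_n, F_{n+1}]$. The key claim is that any point in such a slab is within distance $W(F)/2$ of the bounding fibers (via a convexity argument for the distance function to a minimal surface in negative curvature). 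Combined with $\operatorname{diam}(F_n) \leq D_0$, this gives $\operatorname{diam}(M_\phi) \leq W + c D_0$ for a universal constant $c$. Since covers are local isometries, $\operatorname{diam}(M) \leq \operatorname{diam}(M_\phi) \leq W + c D_0$; rearranging yields $W \geq \operatorname{diam}(M) - c D_0$, so setting $\delta = c D_0$ closes the argument.

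The hard part will be the slab estimate in the previous step, namely quantitatively bounding by $W(F)/2$ the distance from an arbitrary point of a slab to the bounding minimal fibers. A crude triangle-inequality argument gives only $d(\tilde x, F_n \cup F_{n+1}) \leq W(F)$, leading to the weaker bound $W \geq \operatorname{diam}(M)/2 - O(D_0)$; to match the form stated in the theorem one needs a sharper estimate, perhaps via the second variation formula for minimal surfaces in negative curvature, or via an explicit min-max slicing of $M_\phi$ that balances the two fibers.
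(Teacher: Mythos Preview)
Your strategy is the same as the paper's: reduce to the width inequality from the introduction and then show $W\ge \operatorname{diam}(M)-\delta$ by bounding the extrinsic diameter of a fiber representative in terms of $\epsilon$ and $g(S)$, so that $\operatorname{diam}(M)\le \operatorname{diam}(M_\phi)\le W+\delta$.

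The one substantive difference is the choice of fiber representative. The paper does not use a least--area surface together with monotonicity and packing; it takes a \emph{pleated} surface $f:(X,h)\to M$ in the homotopy class of the fiber. This is cleaner for the diameter bound: the intrinsic metric $h$ is genuinely hyperbolic with $\operatorname{Area}(X,h)=2\pi(2g(S)-2)$, the map $f$ is $1$--Lipschitz, and since $f_*$ is injective the intrinsic injectivity radius of $(X,h)$ is at least $\operatorname{inj}(M)>\epsilon$. The diameter of a closed hyperbolic surface with injectivity radius $\ge\epsilon$ and genus $g(S)$ is bounded by some $\delta(\epsilon,g(S))$, and the $1$--Lipschitz property pushes this to the image in $M_\phi$. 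Your minimal--surface route reaches the same conclusion but with more analytic machinery (existence, monotonicity, packing).

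Regarding the step you flag as hard --- bounding $\operatorname{diam}(M_\phi)$ by $W+\delta$ rather than, say, $2W+O(\delta)$ --- the paper simply asserts this: after lifting the pleated surface to the infinite cyclic cover it states that ``$\phi$ identifies $f'(X)$ to its another lift $\phi\circ f'$, which implies that the diameter of $M_\phi$ is less than $W+\delta$,'' without a convexity or second--variation argument. So the sharp slab estimate you are trying to justify is not proved in the paper either; it is taken for granted. Your caution here is reasonable, but it is not a discrepancy between your approach and the paper's --- both arguments pass through the same claim.
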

 
An alternative for $W$ could be $||\phi||$, the translation length of a hyperbolic isometry corresponding to $\phi$, 
  and for large $||\phi||$
  $$k\geq \frac{2\pi (||\phi||-\delta)(2g(M)-2)}{3\operatorname{vol}(M)}.$$
  This $||\phi||$ is also inaccessible unless we know
  $M_\phi$ explicitly.
  
One can introduce another invariant of $M$ as follows.  For a closed hyperbolic 3-manifold $M$, define the fiber genus of $M$
$$fg(M)=\inf\{g: \exists\ \text{fibered covering}\ M_\phi\ \text{ of {\it M} with a fiber of genus}\ g\}.$$ Then obviously $ fg(M)\geq g(M)$ and the inequalities above hold with $g(M)$ being replaced by $fg(M)$.
Hence, using Lemma \ref{betti} one can get a lower bound for the degree in terms of the first betti number $b_1(M)$ of $M$ as follows.
\begin{theorem}
Let $M$ be a closed hyperbolic 3-manifold and  let $M'$ be a degree $k$ cover of $M$ which fibers over the circle with a fiber $S$.  Furthermore, if  we assume that $\operatorname{inj}M>\epsilon$ and the diameter of $M$ is large, then there exists  $\delta=\delta(\epsilon, g(S))$ such that
$$k\geq \frac{2\pi (\mathrm{diam}(M)-\delta)( b_1(M)-3)}{3 \operatorname{vol}(M)}.$$
If $W$ is large enough compared to $\delta$, then there exists $C=C(W, \operatorname{inj}M, g(S))$ such that 
$$k\geq \frac{2C\pi\operatorname{syst}(M) (b_1(M)-3)}{3 \operatorname{vol}(M)},$$ where $\operatorname{syst}(M)$ is the systole of $M$.
\end{theorem}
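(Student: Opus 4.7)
My plan is to deduce both inequalities from the variants of Theorem \ref{main} stated in the introduction, by first replacing $g(M)$ with $fg(M)$ (justified by the observation in the introduction that $fg(M)\geq g(M)$ and that the bounds of Theorem \ref{main} and of its $W$-form remain valid under this substitution), and then converting $fg(M)$ into $b_1(M)$ via Lemma \ref{betti}. The expected content of Lemma \ref{betti} is an inequality of the shape
\[
2\,fg(M)-2 \;\geq\; b_1(M)-3,
\]
which I would prove in two standard steps: if $M_\phi\to M$ is a fibered cover realizing $fg(M)=g(S)$, then the Wang sequence for the mapping torus $M_\phi$ gives $b_1(M_\phi)\leq 2g(S)+1$, while the cohomology transfer for the finite cover $M_\phi\to M$ gives $b_1(M)\leq b_1(M_\phi)$. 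Concatenating yields $b_1(M)\leq 2\,fg(M)+1$, equivalent to the displayed inequality.

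Granting Lemma \ref{betti}, the first inequality of the theorem is immediate: apply Theorem \ref{main} to $M$ with $g(M)$ replaced by $fg(M)$ and substitute the Lemma into the numerator, leaving the factors $\operatorname{diam}(M)-\delta$, $\operatorname{vol}(M)$, and $\delta=\delta(\epsilon,g(S))$ unchanged.

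For the systole inequality, I would use the $||\phi||$-variant of Theorem \ref{main} displayed just after it, so that the linear factor in the numerator is $||\phi||-\delta$ in place of $\operatorname{diam}(M)-\delta$, and note that $||\phi||$ and $W$ differ only by a controlled amount in the given parameters. It then suffices to exhibit a constant $C=C(W,\operatorname{inj}(M),g(S))$ with $W-\delta\geq C\cdot\operatorname{syst}(M)$, which is where the hypothesis ``$W$ large compared to $\delta$'' enters. I would arrange such a $C$ via a uniform upper bound on $\operatorname{syst}(M)$ in terms of $\operatorname{inj}(M)$ and $g(S)$ alone: on one side $\operatorname{inj}(M)>\epsilon$ rules out Margulis tubes and forces $\operatorname{syst}(M)\geq 2\epsilon$, while on the other side the existence of an essential fibered surface of genus $g(S)$ caps $\operatorname{syst}(M)$ from above through a standard area/intersection argument. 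Substituting the resulting ratio bound into the $W$-form and then applying Lemma \ref{betti} completes the second inequality.

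The main obstacle I foresee is precisely this uniform upper bound on $\operatorname{syst}(M)$. It must depend only on the parameters $W$, $\operatorname{inj}(M)$, $g(S)$ named in the statement, and not on global invariants such as $\operatorname{vol}(M)$ or the detailed geometry of the particular fibered cover $M_\phi$. A careful application of the Margulis lemma together with a bounded-genus essential surface argument should suffice, but the constants need to be tracked explicitly to match the dependence $C=C(W,\operatorname{inj}(M),g(S))$ claimed in the theorem.
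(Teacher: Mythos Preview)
Your derivation of the first inequality is exactly the paper's: replace $g(M)$ by $fg(M)$ in the diameter form of Theorem~\ref{main}, then invoke Lemma~\ref{betti} (whose proof you sketch correctly: $b_1(M)\le b_1(M_\phi)\le 2g(S)+1$) to pass from $2fg(M)-2$ to $b_1(M)-3$.

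For the systole inequality your route diverges from the paper's, and the detour you propose is unnecessary. You plan to obtain $W\ge C\cdot\operatorname{syst}(M)$ by first producing an \emph{a priori} upper bound on $\operatorname{syst}(M)$ in terms of $g(S)$ via an area/intersection argument on a pleated representative of the fiber. That can be made to work, but the paper bypasses it with a one–line observation: $\|\phi\|$ is by definition the translation length of the hyperbolic isometry $t\in\pi_1(M_\phi)\subset\pi_1(M)$, hence it is the length of a closed geodesic in $M$, and therefore
\[
\|\phi\|\;\ge\;\operatorname{syst}(M).
\]
Combining this with $W\ge\|\phi\|-\delta$ (already established in the diameter discussion), the hypothesis ``$W$ large compared to $\delta$'' yields $W\ge C\|\phi\|\ge C\operatorname{syst}(M)$ for a constant $C=C(W,\epsilon,g(S))$, and plugging into the $W$–form of the basic bound together with Lemma~\ref{betti} gives the claimed inequality. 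Thus the ``main obstacle'' you foresee simply does not arise: no independent upper bound on $\operatorname{syst}(M)$ is needed, only the trivial fact that a specific closed geodesic is no shorter than the shortest one.
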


\section{Formula for Laplacian of energy of harmonic maps to the circle}

{{For the reader's convenience, we collect some basic formulas for Laplacian of the
energy of harmonic maps to the circle,
{\cite{stern-scalar-2020}}. }}Let $u : N \to \mathbb{S}^1$ be a harmonic map
from a closed Riemannian 3-manifold to the unit circle.

Choose an orthonormal frame $e_1, e_2, e_3$ adapted to $\Sigma_{\theta} = u^{-
1} (\theta)$, so that $e_1, e_2$ are tangential to $\Sigma_{\theta}$, and $e_3
= \frac{\nabla u}{| \nabla u|}$. Let $R_{ij}$ denote the sectional curvature
of $N$ for the section $e_i \wedge e_j$. The symmetric quadratic tensor
$(h_{ij} = \langle D_{e_i} e_3, e_j \rangle)$ is the second fundamental form
$k_{\Sigma_{\theta}}$ for $\Sigma_{\theta}$. Note that $k_{\Sigma_{\theta}} =
(| \nabla u|^{- 1} D \mathrm{d} u) |_{\Sigma_{\theta}}$.

Then the Gauss equation gives
\[ K = R_{12} + h_{11} h_{22} - h_{12}^2, \]
 the scalar curvature $R_N$ of $N$ is
\[ R_N = 2 (R_{12} + R_{13} + R_{23}) \]
and
\[ \ensuremath{\operatorname{Ric}} (e_3, e_3) = R_{13} + R_{23} . \]
The mean curvature $H_{\Sigma_{\theta}}
=\ensuremath{\operatorname{tr}}k_{\Sigma_{\theta}}$ is $h_{11} + h_{22}$ and
the scalar curvature $R_{\Sigma_{\theta}}$ is $2 K$. Hence
\[ \ensuremath{\operatorname{Ric}} (e_3, e_3) = \frac{1}{2}  (R_N -
   R_{\Sigma_{\theta}} + H^2_{\Sigma_{\theta}} - |k_{\Sigma_{\theta}} |^2) .
\]
Using harmonicity of $u$, one can verify that
\[ | \nabla u|^2  (H^2_{\Sigma_{\theta}} - |k_{\Sigma_{\theta}} |^2) = 2 |
   \mathrm{d} |h||^2 - |Dh|^2, \]
and one can rewrite
\begin{align}
  & \ensuremath{\operatorname{Ric}} (\nabla u, \nabla u) \\
  = & | \nabla u|^2 \ensuremath{\operatorname{Ric}} (e_3, e_3) \\
  = & \frac{1}{2}  | \nabla u|^2  (R_{\Sigma_{\theta}} - R_{\Sigma_{\theta}})
  + \frac{1}{2}  (2| \mathrm{d} | \nabla u||^2 - |D \mathrm{d} u|^2) .
  \label{schoen yau} 
\end{align}
Then using the standard Bochner identity for $\mathrm{d} u$
\begin{equation}
  \Delta_g \frac{1}{2}  | \nabla u|^2 = |D \mathrm{d} u|^2
   +\ensuremath{\operatorname{Ric}} (\mathrm{d} u, \mathrm{d} u),
   \label{bochner harmonic one form}
\end{equation}
one can deduce the formula as in  equation (2) of Theorem 1.1 in {\cite{stern-scalar-2020}}
\begin{equation}
  2 \int_N \frac{| \mathrm{d} u|}{2} R_{\Sigma_{\theta}} = 4 \pi \int_{\theta}
  \chi (\Sigma_{\theta}) \mathrm{d} \theta \geqslant \int_N R_N  | \mathrm{d}
  u|. \label{average}
\end{equation}
Here the first equality follows from the
coarea formula and Gauss-Bonnet theorem. 
%

\section{Degree bound for finite cover which fibers over circle}

Let $M_{\phi}$ be a hyperbolic mapping torus of $S$ via a pseudo-Anosov map
$\phi$ and
\[ u : M_{\phi} = S \times [0, 1] / (x, 0) \sim (\phi (x), 1) \to [0, 1] / 0
   \sim 1 \]
the projection. On the infinite cyclic cover $S \times \mathbb{R}$, $\phi$
acts as a translation. Since $$\pi_1 (M_{\phi}) = \langle \pi_1 (S), t|t \gamma
t^{- 1} = \phi_{\ast} (\gamma), \gamma \in \pi_1 (S) \rangle,$$ $\phi$
corresponds to a hyperbolic isometry $t$, and we denote $\| \phi \|$ the
translation length of $t$ on $\mathbb{H}^3$. This is the hyperbolic
translation length of $\phi$ on the infinite cyclic cover $S \times
\mathbb{R}$. Hence $\| \phi \|$ is greater than the width of the fundamental domain of
$M_{\phi}$ on the cyclic cover $S \times \mathbb{R}$ where the $S\times 0$ and $S\times 1$
sides are identified by the action of $\phi$.

{If $\text{rank} H_2(M_\phi, \mathbb{Z})\geq 2$,  it is known that $M_\phi$ has infinitely many different fiberings over the circle, hence we choose a fibering such that $|\chi(S)|\geq 2$ is minimal. From now on, we assume that $M_\phi$ is fibered over the circle with minimum $|\chi(S)|$ and $\phi$ is a pseudo-Anosov map of $S$.}
\begin{figure}[h]
  \begin{center}
    \begin{tikzpicture}[x=1cm,y=1cm]

\begin{scope}[shift={(2,0)}, thick]
\clip(-1.8,-2)rectangle(3,2);
\draw (0,0) circle [x radius=0.6, y radius=1.3];
\end{scope}
\begin{scope}[shift={(2,0)}, thick]
\clip(-1.8,-2)rectangle(3,2);
\draw (0,0.6) circle [x radius=0.1, y radius=0.5];
\end{scope}
\begin{scope}[shift={(2,0)}, thick]
\clip(-1.8,-2)rectangle(3,2);
\draw (0,-0.6) circle [x radius=0.1, y radius=0.5];
\end{scope}
\begin{scope}[shift={(-3,0)}, thick]
\clip(-1.8,-2)rectangle(3,2);
\draw (0,0) circle [x radius=0.6, y radius=1.3];
\end{scope}

\begin{scope}[shift={(-3,0)}, thick]
\clip(-1.8,-2)rectangle(3,2);
\draw (0,0.6) circle [x radius=0.1, y radius=0.5];
\end{scope}
\begin{scope}[shift={(-3,0)}, thick]
\clip(-1.8,-2)rectangle(3,2);
\draw (0,-0.6) circle [x radius=0.1, y radius=0.5];
\end{scope}

\draw[thick] (-4,-1.3) -- (4,-1.3);
\draw[thick] (-4,1.3) -- (4,1.3);
\draw[thick] (-2, -3) -- (1.5, -3);
\draw[thick] (-2.4, 0) -- (1.4,0);
\draw[thick] (-2.44, -0.3) -- (1.4, -0.2);


\draw (-2, -2.8) node{{0}};
\draw (1.5, -2.8) node{{1}};
\draw(0.5, -2) node{{$u$}};
\draw(0, -2) node{{$\downarrow$}};
\draw (-0.5, -0.9) node{{$\Longrightarrow$}};
\draw (4, 0) node{{$S\times {\mathbb R}$}};
\draw (-0.47,-0.5) node{{$||\phi||$}};
\draw (-0.3,0.2) node{{W}};
\draw (-2.3,0.5) node{{$\rightarrow$}};
\draw (-1.85,0.5) node{{${\nabla u}$}};
\draw (-2.3,0.02) node{{$\neg$}};
\draw (1.28,0.04) node{{$\leftharpoondown$}};
\draw (-3, -1.6) node{{$u^{-1}(0)$}};
\draw (2, -1.6) node{{$u^{-1}(1)$}};
\draw (-5,0) node{{\tiny $-\infty$}};

\end{tikzpicture}
  \end{center}
  
  \
  \caption{Lifted map of $u$ to $S \times \mathbb{R} \rightarrow [0, 1]
  \subset \mathbb{R}$.}
\end{figure}
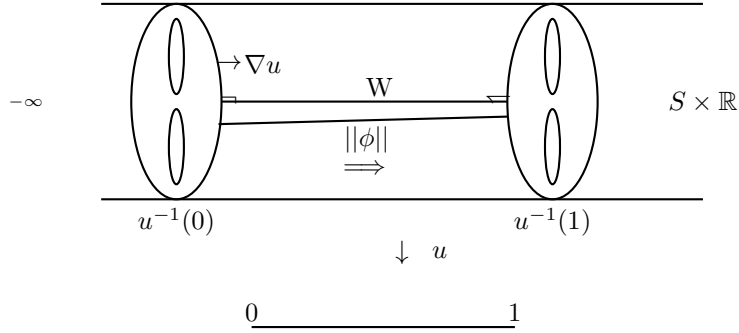

Choose an orthonormal basis $e_1, e_2$ tangent to $S = u^{- 1}
(\theta)$, $e_3$ such that $e_3 = \frac{\nabla u}{| \nabla u|}$. Then $|
\mathrm{d} u| = | \nabla u|$ and
$$\| \mathrm{d}
  u\|_{L^2}=\sqrt{\int_{M_\phi} |\nabla u|^2}.$$ Note that  for any $x\in S\times \{0\}$,  along the gradient flow starting from $x$
 in  $u^{-1}(0)$ to $u^{-1}(1)$, $u$ behaves like the projection from $[0, \text{length of the gradient flow}]$ to $[0,1]$. Hence $$|du|\leq \frac{1}{ \text{length of the gradient flow}}\leq \frac{1}{\text{W=width of the fundamental domain}}.$$ Here the width means the smallest distance between points in
  $S\times\{0\}$ and points in $S\times\{1\}$.
 Then $$\| \mathrm{d}
  u\|_{L^2}\leq \frac{\sqrt{\ensuremath{\operatorname{vol}} (M_{\phi})}}{(\text{W=width of the fundamental domain})}.$$ In general if we choose a different fibering than the minimal $|\chi(S)|$ fibering, this width can go to zero as $|\chi(S)|$ tends to infinity.

Let $u'$ be a harmonic map homotopic to $u$. For each regular value {{ $\theta \in
\mathbb{S}^1$}}, $u^{- 1} (\theta) = S$ and $u^{\prime - 1} (\theta) = \Sigma_{\theta}$
are homologous in $M_{\phi}$. Since $M_{\phi}$ is a mapping torus of $S$, the genus of
$\Sigma_{\theta}$ is at least the genus of  $S$, i.e. $-\chi(S)\leq -\chi(\Sigma_\theta)$. By Sard's theorem,  $-\chi(S)\leq -\chi(\Sigma_\theta)$ for almost all {{$\theta$ in $\mathbb{S}^1$}}. Hence by equation \eqref{average}, we get
\begin{equation}\label{genus}
  - 4 \pi \chi (S) \leqslant \| \mathrm{d} u' \|_{L^2} \|6\|_{_{L^2}}
  \leqslant 6 \sqrt{\ensuremath{\operatorname{vol}} (M_{\phi})} \| \mathrm{d}
  u\|_{L^2} = \frac{6\ensuremath{\operatorname{vol}} (M_{\phi})}{W}.
\end{equation}

Suppose $M_\phi \ra M$ is a $k$-fold covering map.  Then $\mathrm{vol}(M_\phi)=k \mathrm{vol}(M)$, 
and by equation \eqref{genus},
$$-4\pi\chi(S)\leq \frac{6 \ensuremath{\operatorname{vol}} (M_{\phi})}{W}
\leq \frac{6k \ensuremath{\operatorname{vol}} (M)}{W}.$$
This gives a lower bound
\begin{equation}\label{deg-lower}
k\geq \frac{2\pi W|\chi(S)|}{3\operatorname{vol}(M)}.
\end{equation}

{\bf Remark:} For a finite volume hyperbolic 3-manifold  $N$ with cusps, one can truncate the manifold along horotori to obtain a manifold $M$ with boundary.
Then the formula  reads as (see \cite{HS}):
$$\int_M R_M|du| + \int_{\partial M} H_{\partial M} |du|\leq 4\pi \int_{S^1} \chi(\Sigma_\theta).$$ By truncating the cusps deeper and deeper, we can make the area of the boundary tends to zero, hence the second term in the formula goes to zero, and finally we can assume that the same formula \eqref{average} holds even for cusped hyperbolic 3-manifolds.

\section{Better lower bound with injectivity radius lower bound}
Suppose that a closed hyperbolic 3-manifold $M$, whose finite cover $M_\phi$ is fibered over the circle with fiber $S$ with minimal $|\chi(S)|$ among possible different fiberings, has a  lower bound for the injectivity radius by $\epsilon$.
Take a pleated surface $f:X\ra M$ from a hyperbolic surface $X$ homeomorphic to $S$, and then take a lift
$$\tilde f: X\ra  M_\phi.$$ Then $\tilde f(X)$ is homotopic to $S$ in $M_\phi$.
Here we assume the injectivity radius of $M$ is at least $\epsilon$, the diameter of $\tilde f(X)$ is bounded above by some number $\delta$. Indeed $\delta$ is the upper bound of the diameter of hyperbolic surfaces whose injectivity radius is bounded below by $\epsilon$ and genus at least $g(S)$.
Consider a further lift $$f':X\ra \tilde M_\phi$$ where $\tilde M_\phi$ is the infinite cyclic cover over which $\phi$ acts as a translation with the translation length $||\phi||$. Then $\phi$ identifies $f'(X)$ to its another lift $\phi\circ f'$, which implies that the diameter of $M_\phi$ is less than $W+\delta$ where $\delta=\delta(\epsilon, g(S))$ and $W$ is the width of the fundamental domain as in the previous section. Hence
$$\mathrm{diam}(M)\leq \mathrm{diam}(M_\phi)\leq W+\delta.$$
Finally we obtain
\begin{equation}\label{diam}
k\geq \frac{(\mathrm{diam}(M)-\delta)2\pi|\chi(S)|}{3\mathrm{vol}(M)}.
\end{equation}

If $M_\phi$ is a finite cover of $M$, which fibers over the circle with fiber $S$, then
$f:S\hookrightarrow M_\phi \ra M$ satisfies that $f_*$ is injective. Hence
$g(S)\geq g(M)$. Then the above inequality reads
$$k\geq \frac{(\mathrm{diam}(M)-\delta)2\pi(2g(M)-2)}{3\mathrm{vol}(M)}.$$ 

Once there is a lower bound for the injectivity radius and the genus of $S$ is fixed,
the diameter of $S$ has an upper bound by $\delta$, and since $\phi$ identifies $S\times \{0\}$ to $S\times \{1\}$, we obtain $||\phi||\leq W+\delta$ so that $W\geq ||\phi||-\delta$ for large $W$, hence we
get
\begin{equation}\label{hyp-trans}
k\geq \frac{2\pi (||\phi||-\delta)(2g(M)-2)}{3\operatorname{vol}(M)}.
\end{equation}


Note that the lower bounds in the main theorem holds with $g(M)$ being replaced by $fg(M)$.
The possible advantage of using $fg(M)$ instead of $g(M)$ is as follows.
\begin{lemma}\label{betti}The fiber genus of $M$ satisfies
$fg(M)\geq \frac{b_1(M)-1}{2}$.
\end{lemma}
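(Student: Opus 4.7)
The plan is to bound the first Betti number $b_1(M_\phi)$ from above purely in terms of the genus $g$ of a fiber, and then use monotonicity of $b_1$ under finite covers to transfer the bound down to $M$.

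For the first step, let $M_\phi$ be a mapping torus of a closed connected surface $S$ of genus $g$ with monodromy $\phi$. I would apply the Wang exact sequence in rational homology
$$H_1(S;\mathbb{Q})\xrightarrow{\phi_*-I}H_1(S;\mathbb{Q})\to H_1(M_\phi;\mathbb{Q})\to H_0(S;\mathbb{Q})\xrightarrow{\phi_*-I}H_0(S;\mathbb{Q}).$$
Since $\phi$ acts as the identity on $H_0(S;\mathbb{Q})$ of the connected surface $S$, the rightmost arrow vanishes, so
$$b_1(M_\phi)=1+\operatorname{rank}\operatorname{coker}\bigl((\phi_*-I)|_{H_1(S;\mathbb{Q})}\bigr).$$
Using the identity $\operatorname{rank}\ker=\operatorname{rank}\operatorname{coker}$ for an endomorphism of a finite-dimensional vector space and $\dim H_1(S;\mathbb{Q})=2g$, I conclude $b_1(M_\phi)\leq 2g+1$.

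For the second step, let $\pi:M_\phi\ra M$ be the given finite covering of degree $k$. The transfer map $\tau:H_*(M;\mathbb{Q})\to H_*(M_\phi;\mathbb{Q})$ satisfies $\pi_*\circ\tau=k\cdot\mathrm{id}$, hence $\pi^*:H^1(M;\mathbb{Q})\to H^1(M_\phi;\mathbb{Q})$ is injective. Therefore $b_1(M)\leq b_1(M_\phi)\leq 2g+1$, that is $g\geq (b_1(M)-1)/2$. Taking the infimum over all fibered finite covers of $M$ yields $fg(M)\geq (b_1(M)-1)/2$, as claimed.

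I do not foresee a real obstacle; both the Wang-sequence computation of $b_1$ of a mapping torus and the transfer-map inequality $b_1(M)\leq b_1(M_\phi)$ are standard. The only mild care needed is to work over $\mathbb{Q}$ from the outset so that the equality of ranks of kernel and cokernel of $\phi_*-I$ is automatic from rank--nullity.
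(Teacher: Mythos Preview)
Your argument is correct and follows the same route as the paper: the paper simply asserts $b_1(M_\phi)\leq b_1(S)+1$ and $b_1(M)\leq b_1(M_\phi)$ without justification, and you have supplied the standard Wang-sequence and transfer-map arguments behind these two inequalities. There is nothing to add.
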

\begin{proof}Note that $b_1(M_\phi)\leq  b_1(S)+1$ where $S$ is the fiber. Hence
$b_1(M)\leq b_1(M_\phi)\leq 2g(S)+1$, and the claim follows.
\end{proof}
Hence we obtain
\begin{corollary}Let $M$ be a closed hyperbolic 3-manifold and  $M'$ be a degree $k$ cover of $M$ which fibers over the circle. Then
$$k\geq \frac{2\pi W (b_1(M)-3)}{3 \operatorname{vol}(M)}.$$
\end{corollary}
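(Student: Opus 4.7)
The plan is to chain together the general lower bound \eqref{deg-lower} with the topological comparison supplied by Lemma \ref{betti}, after identifying the fiber of $M'$ with an instance witnessing $fg(M)$.

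First I would apply inequality \eqref{deg-lower} directly to the cover $M' \to M$. Choosing among all fiberings of $M'$ the one with smallest $|\chi(S)|$ (so that the width $W = W(M')$ appearing in the statement is the one attached to this preferred fiber surface $S$), the inequality reads
\begin{equation*}
k \;\geq\; \frac{2\pi W |\chi(S)|}{3\operatorname{vol}(M)}.
\end{equation*}
Since $S$ is the fiber of an actual fibered finite cover of $M$, the genus $g(S)$ is one of the genera over which $fg(M)$ is taken as an infimum, so $g(S) \geq fg(M)$, and consequently $|\chi(S)| = 2g(S) - 2 \geq 2\,fg(M) - 2$.

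Next I would substitute the content of Lemma \ref{betti}, namely $fg(M) \geq \tfrac{b_1(M)-1}{2}$, to obtain
\begin{equation*}
|\chi(S)| \;\geq\; 2 \cdot \frac{b_1(M)-1}{2} - 2 \;=\; b_1(M) - 3.
\end{equation*}
Inserting this into the first displayed inequality yields the claimed bound
\begin{equation*}
k \;\geq\; \frac{2\pi W (b_1(M)-3)}{3\operatorname{vol}(M)}.
\end{equation*}

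There is no real obstacle here; the corollary is essentially a bookkeeping step, repackaging \eqref{deg-lower} through the topological chain $|\chi(S)| \geq 2\,fg(M) - 2 \geq b_1(M) - 3$. The only point worth flagging is the convention on $W$: one must agree that $W$ in the corollary denotes the width attached to a minimal $|\chi(S)|$ fibering of $M'$, so that the input to \eqref{deg-lower} and the $W$ in the conclusion refer to the same geometric quantity. Once this is fixed, the estimate is immediate and the bound is vacuous precisely when $b_1(M) \leq 3$, in which case it reduces to the trivial statement $k \geq 0$.
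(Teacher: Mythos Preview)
Your argument is correct and matches the paper's intended reasoning exactly: the corollary is stated immediately after Lemma~\ref{betti} with nothing more than ``Hence we obtain,'' and your chain $|\chi(S)| \geq 2\,fg(M)-2 \geq b_1(M)-3$ plugged into \eqref{deg-lower} is precisely what that ``Hence'' encodes. Your remark about the convention on $W$ is also apt, since the paper is loose about whether $W$ refers to the width for the given cover $M'$ or the minimal-degree cover.
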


\section{Intrinsic definition for W}
Note that $W=W(M)$ is defined to be the width of the fundamental domain of $M_\phi$ in
$S\times \mathbb R$ where $S$ is the minimal genus fiber of a minimal degree
cover which fibers over the circle.
What is a practical way to calculate $W$, or an intrinsic definition for $W$?

When $\operatorname{inj}M>\epsilon$ and $W$ is large enough compared to $\delta$, then $W\geq C ||\phi||$ where $C=C(W,\epsilon, g(S))$.
Hence $W\geq C \text{syst}(M)$ where $\text{syst}(M)$ is the systole of $M$.
In general, $C\ra 0$ as $g(S)\ra\infty$.
In summary, 
\begin{corollary}
When there is a lower bound for the injectivity radius of $M$, we obtain for some $C=C(W, \operatorname{inj}M, g(S))$
$$k\geq \frac{2C\pi \operatorname{syst}(M) (b_1(M)-3)}{3 \operatorname{vol}(M)}.$$
\end{corollary}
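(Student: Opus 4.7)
The plan is to chain together three ingredients already visible in the paper: the previous corollary giving $k \geq \frac{2\pi W(b_1(M)-3)}{3\operatorname{vol}(M)}$, an injectivity-radius comparison $W \geq C \cdot \|\phi\|$, and the trivial bound $\|\phi\| \geq \operatorname{syst}(M)$.

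First I would pin down the comparison between the width $W$ and the translation length $\|\phi\|$. From the description of the cyclic cover in Section 3, the isometry $t$ corresponding to $\phi$ moves $S \times \{0\}$ to $S \times \{1\}$, so $\|\phi\| \geq W$. For the reverse direction, Section 4 constructs a pleated surface lift $f' : X \to \tilde{M}_\phi$; since $\operatorname{inj}(M) > \epsilon$ and the image has genus at least $g(S)$, its diameter is bounded by $\delta = \delta(\epsilon, g(S))$. Because $\phi$ carries $f'(X)$ to its $\phi$-translate in the next fundamental domain, one obtains $\|\phi\| \leq W + \delta$. Combining, $W \leq \|\phi\| \leq W + \delta$, so setting $C := W/(W+\delta)$ yields $W \geq C \|\phi\|$. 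This $C$ depends only on $W$, $\operatorname{inj}(M)$, and $g(S)$, and it is bounded away from $0$ as soon as $W$ is large relative to $\delta$.

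Next I would invoke the elementary observation that $t \in \pi_1(M_\phi) \leq \pi_1(M)$ is non-trivial, so its translation length on $\mathbb{H}^3$ is at least the infimum of translation lengths over non-trivial deck transformations, i.e. $\|\phi\| \geq \operatorname{syst}(M)$. Chaining, $W \geq C \cdot \operatorname{syst}(M)$, and substituting into the previous corollary gives the stated inequality.

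I do not anticipate serious obstacles: the only geometric input is the pleated surface diameter estimate $\|\phi\| \leq W + \delta$, which is precisely what Section 4 establishes, and the remainder is a short algebraic chain. The main thing to watch is the dependence of $C$ on the stated parameters, but this drops out of the explicit formula $C = W/(W+\delta)$ with $\delta = \delta(\operatorname{inj}(M), g(S))$.
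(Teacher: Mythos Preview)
Your proposal is correct and matches the paper's approach exactly: the paper also derives $W\ge C\|\phi\|$ from the pleated-surface diameter bound of Section~4, then uses $\|\phi\|\ge \operatorname{syst}(M)$ and substitutes into the previous corollary. Your explicit identification $C=W/(W+\delta)$ is a welcome clarification that the paper leaves implicit.
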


\section{ Some examples}
\subsection{Examples for (fiber) genus}
In this section, we construct a sequence of hyperbolic 3-manifolds whose (fiber) genus tends to infinity.
By Lemma \ref{betti}, $fg(M)\geq \frac{b_1(M)-1}{2}$, hence any sequence of hyperbolic 3-manifolds $M_n$ with $b_1(M_n)\ra\infty$ will have $fg(M_n)\ra\infty$.

The real question is whether there exists a sequence of hyperbolic 3-manifolds whose genus tends to infinity.
This question has to do with the injectivity radius of the manifold.
Let $f:S_g\ra M$ be a continuous map with injective $f_*:\pi_1(S_g)\ra\pi_1(M)$. Realize the surface by a pleated surface $p:(S,h)\ra M$ within the homotopy class.
If $r$ is the injectivity radius of $M$, then the injectivity radius of $(S,h)$ is greater than or equal to $r$. Hence
$$ \mathrm{Area}(D_r)\leq \mathrm{Area}(S,h)=2\pi(2g-2),$$ where $D_r$ is a hyperbolic disk of radius $r$.
This shows that the genus of such a surface tends to infinity as the injectivity radius of $M$ tends to infinity. Hence, once we take a sequence of hyperbolic 3-manifolds $M_n$ whose injectivity radius tends to infinity, the genus $g(M_n)\ra\infty$.

\subsection{Examples for $fg(M)-g(M)\ra\infty$}
Let $M$ be a closed hyperbolic 3-manifold that contains an embedded non-separating $\pi_1$-injective surface $i:S\to M$ such that $$\mathrm{dim} (H_1(M,\mathbb R)/i_*(H_1(S,\mathbb R)))$$ is at least 2. Such a pair $(M,S)$ can be constructed as follows.  Agol \cite{Agol} proved that the group of any closed hyperbolic 3-manifold $N$ virtually surjects to $\mathbb Z*\mathbb Z$.
Realize the homomorphism by a map $f:M\ra S^1 \vee S^1$ for some finite cover $M$ of $N$. Then take $S$ to be the preimage of a non-singular point.

Let $M_n$ be the $n$-sheeted cyclic cover of $M$ obtained by taking $n$ copies of $M\setminus S$ and pasting them together. Then one can show that $b_1(M_n)\ra\infty$, hence $fg(M_n)\ra\infty$, but $g(M_n)\leq g(S)$.

\subsection{Examples for Dehn fillings}
A natural way to obtain a family of closed hyperbolic $3$-manifolds is applying Dehn fillings to a fixed hyperbolic knot/link complement. 

At first, by \cite{KW}, any cusped hyperbolic $3$-manifold $M$ contains a $\pi_1$-injective quasi-Fuchsian closed subsurface $S\looparrowright M$. If $M$ has only one cusp (e.g. a hyperbolic knot complement), then by \cite{Ba}, for all but finitely many hyperbolic Dehn fillings of $M$, $S$ is still $\pi_1$-injective in the filled manifold, so $g(\cdot)$ is bounded above by the genus of $S$. In this case, $g(\cdot)$ is bounded above for all hyperbolic Dehn fillings of $M$. 

If $M$ has more than one cusps, by \cite{Ba}, $g(\cdot)$ is bounded by the genus of $S$ for sufficiently long Dehn fillings of $M$. It is not clear whether $g(\cdot)$ is bounded above for all hyperbolic fillings of $M$ in this case.


\vskip .1 in
{\bf Acknowledgement} The  first author would like to thank Ian Agol for answering questions about the virtual Haken conjecture.

\end{document}